
\documentclass[11pt]{article}
\usepackage{indentfirst}
\usepackage{epsfig}
\usepackage[margin=1in]{geometry}
\usepackage{amssymb, amsmath, mathrsfs, dsfont, amsthm, graphicx, hyperref}

\setlength{\topmargin}{-0.6 true cm}  
\setlength{\headheight}{1.5cm} \setlength{\headsep}{0.3cm}
\setlength{\textheight}{8.4in}
\setlength{\oddsidemargin}{0cm}
\setlength{\evensidemargin}{0cm}
\setlength{\textwidth}{6.5in}

         \parskip 1.75\parskip plus 3pt minus 1pt

\newtheorem{thm}{Theorem}[section]
\newtheorem{theorem}[thm]{Theorem}
\newtheorem{lemma}[thm]{Lemma}

\newtheorem{proposition}[thm]{Proposition}
\newtheorem{corollary}[thm]{Corollary}

\newtheorem{rem-eg}[thm]{Remark and Example}

\def\be{\begin{equation}}
\def\ee{\end{equation}}
\def\bea{\begin{eqnarray}}
\def\eea{\end{eqnarray}}
\def\bes{\begin{eqnarray*}}
\def\ees{\end{eqnarray*}}
\def\ba{\begin{array}}
\def\ea{\end{array}}

\def\nn{\nonumber}
\def\lb{\label}

\def\pt{\partial}

\def\R{{\bf R}}
\def\C{{\bf C}}

\def\aa{{\alpha}}
\def\bb{{\beta}}

\def\th{{\theta}}

\def\om{{\omega}}

\def\lm{{\lambda}}
\def\Lm{{\Lambda}}

\def\<{{\langle}}
\def\>{{\rangle}}

\def\diag{{\rm diag}}

\def\r{\right}
\def\l{\left}
\def\d{{\mathrm{d}}}
\def\ii{{\mathrm{i}}}
\def\pt{\partial}

\title{Homographic Solutions of the N-body Generalized Lennard-Jones System}
\author{Bowen Liu$^{1}$\thanks{Partially supported by NSFC (No. 11131004, No. 11671215).
           Email: bowen.liu@mail.nankai.edu.cn} \\
$^{1}$ Chern Institute of Mathematics\\Nankai University, Tianjin 300071, China}

\begin{document}

\maketitle

\begin{abstract}
    In this paper, we obtain the existence of non-planar circular homographic solutions and non-circular homographic solutions of the $(2+N)$- and $(3+N)$-body problems of the Lennard-Jones system. 
    These results show the essential difference between the Lennard-Jones potential and the Newton's potential of universal gravitation. 
\end{abstract}

{\bf 2010 MS classification:} 34C25, 70H12, 82C22

{\bf Key words:} Homographic solution, Lennard-Jones potential, non-planar circular solution.

{\bf Running title:} Homographic Solutions of N-body Generalized Lennard-Jones System.

\renewcommand{\theequation}{\thesection.\arabic{equation}}
\renewcommand{\thefigure}{\thesection.\arabic{figure}}

\setcounter{figure}{0}
\setcounter{equation}{0}

\numberwithin{equation}{section}
\section{Introduction}
In this paper, we consider the existence of special homographic solutions of the generalized Lennard-Jones (L-J for short) system. 
The configuration space in $\R^3$ is defined by
$$ X  \;=\; \l\{ q=(q_1, q_2,...,q_N) \in (\R^3)^N| \sum_{k=1}^N m_k q_k = 0, q_k \not= q_j, k\not= j\r\}.  $$
The generalized $N$-body L-J potential is given by  
$$ U(q) \;=\; \sum_{k<j} m_k m_j\left(\frac{1}{|q_k-q_j|^{\bb}}-\frac{1}{|q_k-q_j|^{\aa}} \right).$$
For simplicity, we consider the case of $m_i = m_j = 1$. By the Newton's second law, the motion equations are given by
\be \ddot{q}=- \nabla U(q),\ee
where $\nabla U(q)$ is the gradient of $U(q)$. Then the motion of $q_k$ satisfies
\bea \ddot{q}_k \;= \; \sum_{j\not=k} \left(\frac{\bb}{|q_{kj}|^{\bb+2}}-\frac{\aa}{|q_{kj}|^{\aa+2}} \right)(q_k-q_j),  \lb{1.2} \eea
where  $q_{kj} = q_k-q_j$ and $q_{kj}= - q_{jk}$. Let $q = q(t)$ be a solution of (\ref{1.2}).
It is {\bf flat}, if $q(t)$ is contained
in a plane $\Pi(t)$ in $\R^3$ for any $t\in \R$. It is {\bf planar}, if there is a plane $\Pi$ in $\R^3$ which contains $q(t)$ for all $t\in \R$. 

In the literature, researchers obtained many results of the planar solutions of the $N$-body problems of the L-J potential.
In 2004, when $\aa=6$, $\bb=12$, $a=2$ and $b=1$, Corbera, Llibre and P\'erez-Chavela in 
\cite{CorberaLlibrePerez2004CMDA} and \cite{CorberaLlibrePerez2004RACE} obtained the existence of the constant solutions, the circular solutions and central configurations of the planar $2$- and $3$-body problems of L-J system. In \cite{jones2006n}, Jones studied the central configurations and proved $N$-gon and $(N+1)$-gon are the central configurations of the planar L-J system. In \cite{SbS2010DS}, Sbano and Southall proved the existence of the planar periodic solutions of the $N$-body problem of L-J system when period is large enough by the mountain pass theorem. Readers may refers to \cite{LiuLongZeng2018} and \cite{LlibreLong2015QTDS} for literature review on the Lennard-Jones potential.

In this paper, we focus on the flat solutions and the non-flat solutions. We define that for $1 \leq k\leq N$, $r\in \R^+$ and $\om_k = \frac{2\pi k}{N}$,
\be 
Q_1 = (0,0, r_0), Q_2 = (0, 0,-r_0), Q_3 = (0, 0,0), q_k = \phi(\lm)r_0 (\cos\om_k, \sin\om_k, 0), \lb{1.3}
\ee
where $\phi(\lm) = \sqrt{\lm^2-1}$ and define
\be E(t) = \left(\begin{matrix}
    \cos\om(t) & -\sin\om(t) & 0 \\
    \sin\om(t) & \cos\om(t)  & 0 \\
    0         &       0    & 1
\end{matrix}\right). \lb{1.4}\ee 
If $\om(t) = \om_0 t$ for some constant $\om_0\neq 0$, we write $E(t)$ as $E_{\om_0}(t)$ for simplicity. 

At first, we consider the circular solution $q(t)$ of the $(2+N)$-body problem satisfying 
\be 
q(t) = E_{\om_0}(t)q_0, \lb{1.5}
\ee
where $q_0 = (Q_1, Q_2, q_1, ..., q_N)\in (\R^3)^{2+N}$ as in (\ref{1.3}) and $E_{\om_0}(t)$ is given by (\ref{1.4}).
Note that angular momentum is a constant vector parallel with the $z$-axis, i.e., $\dot{\om} \equiv \om_0$. In this case, $q(t)$ is the solution of (\ref{1.2}) only if there exists a $\lm$ such that following equation holds.
\be
\diag(-\om_0^2, -\om_0^2, 0 )q_0 = \nabla U(q). \lb{1.6}
\ee

\begin{theorem}
    When $0<\aa<\bb$, $N \geq 2$, there exists $\lm_1$ which is given by (iii) of Lemma 2.1 such that for any $\lm\geq \max\{2, \lm_1\}$, $q(t) = E_{\om_0}(t)q_0 \in (\R^3)^{2+N}$satisfying (\ref{1.5}) is a solution of (\ref{1.2}) where $\om_0$ is defined by (\ref{1.4}) and satisfies
    \bea  \om_0  = \pm\l( \frac{\th_{\aa}}{(\sqrt{\lm^2-1} r_0)^{\aa+2}}- \frac{\th_{\bb}}{(\sqrt{\lm^2-1} r_0)^{\bb+2}}
    + \frac{2\aa}{(\lm r_0)^{\aa+2}}-\frac{2\bb}{(\lm r_0)^{\bb+2}}\r)^{\frac{1}{2}}, \lb{1.7}\eea
    $r_0 = \frac{G_1(\lm)}{\sqrt{\lm^2-1}}$, $\th_{\aa}$ and $\th_{\bb}$ are constant given by \eqref{2.2}, and $G_1(\lm)$is given by
    \be G_1(\lm) =\frac{\sqrt{\lm^2-1}}{2\lm}\left(\frac{\bb(2\lm^{\bb +2} + 2^{2+\bb}N)}{\aa(2 \lm^{\aa +2}+ 2^{\aa+2}N)}\right)^{\frac{1}{\bb-\aa}}, \quad {\rm for} \quad \lm \geq 2 .\lb{1.8}\ee
\end{theorem}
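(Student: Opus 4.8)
The plan is to use the rotational invariance of $U$ about the $z$-axis to collapse the assertion to the single vector equation \eqref{1.6}, and then to verify \eqref{1.6} componentwise: the two ``pole'' components $Q_1,Q_2$ will determine $r_0$ as a function of $\lm$, a single polygon component will determine $\om_0$, and all remaining components follow by symmetry. Since $U$ is invariant under every $E_s$, one has $\nabla U\big(E_{\om_0}(t)q_0\big)=E_{\om_0}(t)\,\nabla U(q_0)$, while differentiating \eqref{1.4} twice gives $\tfrac{\d^2}{\d t^2}\big(E_{\om_0}(t)q_0\big)=E_{\om_0}(t)\,\diag(-\om_0^2,-\om_0^2,0)\,q_0$; hence $q(t)=E_{\om_0}(t)q_0$ solves \eqref{1.2} exactly when \eqref{1.6} holds, and everything reduces to choosing $r_0=r_0(\lm)$ and $\om_0=\om_0(\lm)$ so that \eqref{1.6} is satisfied in all $2+N$ slots.

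To evaluate $\nabla U(q_0)$ I would first record the three distances present in \eqref{1.3}: $|Q_1-Q_2|=2r_0$; $|Q_i-q_k|=\lm r_0$ for $i\in\{1,2\}$ and every $k$ (which is exactly why $\phi(\lm)=\sqrt{\lm^2-1}$); and $|q_k-q_j|=2\sqrt{\lm^2-1}\,r_0\,\big|\sin\tfrac{\pi(k-j)}{N}\big|$. In the $Q_1$-slot the left side of \eqref{1.6} is $0$; the horizontal part of $\nabla_{Q_1}U$ vanishes because the polygon is invariant under the rotation by $\om_1$ (so $\sum_k(\cos\om_k,\sin\om_k)=0$), and the vertical part, after cancelling the common factor $r_0$, becomes
\[ 2\Big(\frac{\bb}{(2r_0)^{\bb+2}}-\frac{\aa}{(2r_0)^{\aa+2}}\Big)+N\Big(\frac{\bb}{(\lm r_0)^{\bb+2}}-\frac{\aa}{(\lm r_0)^{\aa+2}}\Big)=0; \]
multiplying by $r_0^{\bb+2}$ turns this into a linear equation for $r_0^{\bb-\aa}$ with the unique positive solution $r_0=G_1(\lm)/\sqrt{\lm^2-1}$, $G_1$ as in \eqref{1.8}, and the $Q_2$-slot reduces to the same equation under $z\mapsto-z$. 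In the slot of a polygon vertex $q_\ell$, the vertical component of $\nabla_{q_\ell}U$ is $0$ (the $z$-pulls of $Q_1$ and $Q_2$ cancel, the remaining vertices being coplanar with $q_\ell$), the tangential component is $0$ by the reflection across the plane through the $z$-axis and $q_\ell$, and the radial component, matched against the left side $-\om_0^2\sqrt{\lm^2-1}\,r_0$ of \eqref{1.6}, becomes after dividing by $\sqrt{\lm^2-1}\,r_0$ precisely identity \eqref{1.7}, the polygon sums $\sum_{k=1}^{N-1}\big|\sin\tfrac{\pi k}{N}\big|^{-\aa}$ and $\sum_{k=1}^{N-1}\big|\sin\tfrac{\pi k}{N}\big|^{-\bb}$ being absorbed into the constants $\th_\aa,\th_\bb$ of \eqref{2.2}; the remaining $N-1$ polygon slots follow by the cyclic symmetry $q_\ell\mapsto q_{\ell+1}$.

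It then remains to check that $q_0\in X$ and that a genuine real $\om_0$ exists. Admissibility is immediate: $\sum_k(\cos\om_k,\sin\om_k)=0$ puts the center of mass at the origin, and for $\lm\geq2$ we have $\phi(\lm)>0$ and $G_1(\lm)>0$, so the $2+N$ points are pairwise distinct. For $\om_0$ we need the bracket on the right of \eqref{1.7} to be nonnegative, and this is precisely where the Lennard--Jones shape matters and where part (iii) of Lemma 2.1 is invoked: by \eqref{1.8} the radius $r_0$ remains bounded as $\lm\to\infty$ (in fact $r_0\to\tfrac12(\bb/\aa)^{1/(\bb-\aa)}$), so both $\sqrt{\lm^2-1}\,r_0$ and $\lm r_0$ tend to $+\infty$; hence each attractive ($\aa$-)term in \eqref{1.7} is of order $\lm^{-\aa-2}$ while each repulsive ($\bb$-)term is of the strictly smaller order $\lm^{-\bb-2}$, so the positive $\aa$-terms dominate and the bracket is positive once $\lm\geq\lm_1$. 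Taking $\lm\geq\max\{2,\lm_1\}$ and $\om_0$ as in \eqref{1.7} then produces a solution of \eqref{1.2} of the form \eqref{1.5}. The main obstacle is exactly this last positivity step, i.e. making the comparison quantitative and extracting the explicit threshold $\lm_1$ of Lemma 2.1(iii); the remaining steps are symmetry arguments and elementary algebra.
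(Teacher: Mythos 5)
Your proposal is correct and follows essentially the same route as the paper: reduce to the algebraic equation via rotational symmetry, let the two poles' $z$-equation determine $r_0=G_1(\lm)/\sqrt{\lm^2-1}$, let a ring vertex's radial equation determine $\om_0^2$, and invoke Lemma 2.1(iii) to make $\om_0$ real (the sign discrepancy you inherit from the statement of (1.6) is a typo in the paper; the correct form is $\diag(-\om_0^2,-\om_0^2,0)q_0=-\nabla U(q_0)$, which yields exactly (1.7)). The only minor difference is the positivity step: the paper splits the bracket into the pair $\frac{2\aa}{(\lm r_0)^{\aa+2}}-\frac{2\bb}{(\lm r_0)^{\bb+2}}\ge 0$ (guaranteed for $\lm\ge 2$ since then $\lm r_0\ge(\bb/\aa)^{1/(\bb-\aa)}$) and the pair $\frac{\th_\aa}{(\phi r_0)^{\aa+2}}-\frac{\th_\bb}{(\phi r_0)^{\bb+2}}\ge 0$ (Lemma 2.1(iii)), which ties the threshold exactly to $\max\{2,\lm_1\}$, whereas your asymptotic-dominance argument proves the same conclusion for all large $\lm$.
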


By adding another body $Q_3$ at the center of mass, there exists a class of special homographic solutions of the $(3+N)$-body prolem when $N\geq 2$. Suppose that
\be 
q(t) = E_{\om_0}(t)q, \lb{1.9}
\ee
where $q = (Q_1, Q_2, Q_3, q_1, ..., q_N)\in (\R^3)^{3+N}$ is given by (\ref{1.3}) and $E_{\om_0}(t)$ is given by (\ref{1.4}).
\begin{proposition}
    When $0 < \aa < \bb$, there exists a $\lm$ such that for any $\lm\geq \max\{2, \lm_2\}$, $q(t) = E_{\om_0}(t)q$ satisfying (\ref{1.9}) is the homographic solution  of the system (\ref{1.2}), where $r_0 = \frac{G_2(\lm)}{\sqrt{\lm^2-1}}$, $E_{\om_0}(t)$ is defined by (\ref{1.4}), $\om_0$ satisfying 
    \bea \om_0  =  \pm\l(\frac{\th_{\aa}+\aa}{(\sqrt{\lm^2-1} r_0)^{\aa+2}}- \frac{\th_{\bb}+\bb}{(\sqrt{\lm^2-1} r_0)^{\bb+2}}
    + \frac{2\aa}{(\lm r_0)^{\aa+2}}-\frac{2\bb}{(\lm r_0)^{\bb+2}}\r)^{\frac{1}{2}},  \eea
    and $G_2(\lm)$ is given by
    \be G_2(\lm) = \frac{\sqrt{\lm^2 -1}}{2\lm}\left(\frac{\bb(2^{\bb +2}\lm^{\bb +2}+2\lm^{\bb+2}+2 ^{\bb+2}N) }{\aa(2^{\aa +2}\lm^{\aa +2}+2\lm^{\aa+2}+  2^{\aa+2}N)}\right)^{\frac{1}{\bb-\aa}}.  \ee
\end{proposition}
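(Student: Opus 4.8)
The plan is to follow the proof of Theorem 1.1 for the $(2+N)$-body problem, now with the extra body $Q_3$ placed at the centre of mass. Since $\{E_{\om_0}(t)\}$ is a one-parameter group of rotations of $\R^3$ about the $z$-axis and $U$ is rotation-invariant (so $\nabla U$ is equivariant), while $\frac{d^2}{dt^2}\big(E_{\om_0}(t)q\big)=E_{\om_0}(t)\,\diag(-\om_0^2,-\om_0^2,0)q$, the curve $q(t)=E_{\om_0}(t)q$ with $q=(Q_1,Q_2,Q_3,q_1,\dots,q_N)$ as in \eqref{1.3} solves \eqref{1.2} if and only if, for the fixed configuration $q$, the operator $\diag(-\om_0^2,-\om_0^2,0)$ applied to each body's position equals the force on that body given by the right-hand side of \eqref{1.2} --- this is the $(3+N)$-body analogue of \eqref{1.6}. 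So the whole proof reduces to a finite force-balance computation together with one positivity check, and the resulting solution is automatically homographic, being a relative equilibrium.

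First I would record the interparticle distances of the configuration \eqref{1.3}: $|Q_1-Q_2|=2r_0$, $|Q_1-Q_3|=|Q_2-Q_3|=r_0$, $|Q_i-q_k|=\lm r_0$ for $i=1,2$ (since $\phi(\lm)^2+1=\lm^2$), $|Q_3-q_k|=\phi(\lm)r_0$, and $|q_k-q_j|=\phi(\lm)r_0\,|e^{\ii\om_k}-e^{\ii\om_j}|$ for $j\ne k$. Then I check the equations body by body. For $Q_3$ the required acceleration is $0$, and indeed the forces from $Q_1$ and $Q_2$ cancel while those from the $q_k$ cancel by rotational symmetry of the regular $N$-gon; the equation for $Q_2$ coincides with that for $Q_1$ under the reflection $z\mapsto-z$. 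For $Q_1$, which lies on the axis, the required acceleration is again $0$, so the net $z$-force must vanish; collecting the $z$-components of the forces from $Q_2$, $Q_3$ and the $N$ vertices, dividing by $r_0$ and separating the $\bb$- and $\aa$-parts, one is left with
\be \frac{\bb}{r_0^{\bb+2}}\cdot\frac{2^{\bb+2}\lm^{\bb+2}+2\lm^{\bb+2}+2^{\bb+2}N}{2^{\bb+2}\lm^{\bb+2}}=\frac{\aa}{r_0^{\aa+2}}\cdot\frac{2^{\aa+2}\lm^{\aa+2}+2\lm^{\aa+2}+2^{\aa+2}N}{2^{\aa+2}\lm^{\aa+2}}, \ee
which solves to $r_0=G_2(\lm)/\sqrt{\lm^2-1}$, so that $\phi(\lm)r_0=G_2(\lm)$. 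For a vertex $q_k$ the required acceleration is $-\om_0^2q_k$: the forces from $Q_1$ and $Q_2$ add their in-plane parts to $2\big(\tfrac{\bb}{(\lm r_0)^{\bb+2}}-\tfrac{\aa}{(\lm r_0)^{\aa+2}}\big)q_k$ (their $z$-parts cancelling), the force from $Q_3$ contributes $\big(\tfrac{\bb}{(\phi(\lm)r_0)^{\bb+2}}-\tfrac{\aa}{(\phi(\lm)r_0)^{\aa+2}}\big)q_k$ since $q_k-Q_3=q_k$, and the standard regular-$N$-gon identity turns the forces from the other vertices into $\big(\tfrac{\th_\bb}{(\phi(\lm)r_0)^{\bb+2}}-\tfrac{\th_\aa}{(\phi(\lm)r_0)^{\aa+2}}\big)q_k$ with $\th_\aa,\th_\bb$ the constants of \eqref{2.2}. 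Equating with $-\om_0^2q_k$ and solving for $\om_0$ yields exactly the stated formula; compared with \eqref{1.7}, the new terms $\tfrac{\aa}{(\phi(\lm)r_0)^{\aa+2}}-\tfrac{\bb}{(\phi(\lm)r_0)^{\bb+2}}$ are precisely the pull of the central body $Q_3$, which is why $\th_\aa,\th_\bb$ get shifted to $\th_\aa+\aa,\th_\bb+\bb$.

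The one step that is not bookkeeping is to show that, after substituting $r_0=G_2(\lm)/\sqrt{\lm^2-1}$, the quantity under the square root in the formula for $\om_0$ is strictly positive, so that $\om_0\in\R\setminus\{0\}$; this is where the threshold $\lm_2$ is defined, in exact parallel with part (iii) of Lemma 2.1. One uses that $G_2(\lm)$ is asymptotic, as $\lm\to\infty$, to a positive constant depending only on $\aa,\bb$ times $\sqrt{\lm^2-1}$, hence $\phi(\lm)r_0=G_2(\lm)\to\infty$ and $\lm r_0=\lm G_2(\lm)/\sqrt{\lm^2-1}\to\infty$; every term in the bracket then tends to $0$ while the $\aa+2$ powers dominate the $\bb+2$ powers, so the bracket is asymptotic to $\tfrac{\th_\aa+\aa}{(\phi(\lm)r_0)^{\aa+2}}+\tfrac{2\aa}{(\lm r_0)^{\aa+2}}>0$ (note $\th_\aa>0$ for $N\ge2$). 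Turning this into a lower bound uniform over $\lm\ge\lm_2$ --- a clean estimate separating the positive $\aa$-contributions from the negative $\bb$-contributions --- is the main obstacle, but it is precisely the estimate behind Lemma 2.1(iii), so I would carry it out alongside that lemma. The hypothesis $\lm\ge2$ (as in the domain of $G_1$ in \eqref{1.8}) keeps the configuration non-degenerate, and for $\lm\ge\max\{2,\lm_2\}$ the displayed $r_0$ and $\om_0$ then give a genuine homographic solution of \eqref{1.2} --- which, $Q_1$ and $Q_2$ lying off the rotation plane, is moreover non-planar. This completes the proof.
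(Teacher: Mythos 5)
Your proposal is correct and takes essentially the same route as the paper, which omits the proof and simply instructs the reader to repeat the proof of Theorem 1.1 using Lemma 2.1: your body-by-body force balance gives exactly $r_0=G_2(\lm)/\sqrt{\lm^2-1}$ from the $Q_1$ (axis) equation and the shifted constants $\th_\aa+\aa$, $\th_\bb+\bb$ from the central body $Q_3$ in the vertex equation. One small correction: the threshold $\lm_2$ and the bound on $G_2$ are part (iv) of Lemma 2.1 (part (iii) concerns $G_1$), and invoking it directly gives $\om_0^2\ge 0$ without your asymptotic argument, since $\phi(\lm) r_0=G_2(\lm)\ge\max\{(\th_\bb/\th_\aa)^{1/(\bb-\aa)},(\bb/\aa)^{1/(\bb-\aa)}\}$ and $\lm r_0>\phi(\lm) r_0$.
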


In Theorem 1.3, we prove the existence of the non-circular homographic solutions. Suppose
\be q(t)=r(t)E(t)q_0,\lb{1.12}\ee
where $q_0 = (Q_1, Q_2, q_1, ..., q_N)\in (\R^3)^{2+N}$ is given by (\ref{1.3}), $E(t)$ is defined by (\ref{1.4}) and $r_0 \equiv 1$. When $r\in \R^+$, we define 
\be \Psi(r) = \frac{\bb(N2^{\bb+2}+2\lm^{\bb+2})}{(\bb+1)(2\lm)^{\bb+2} r^{\bb+1}}-\frac{\aa(N2^{\aa+2}+2\lm^{\aa+2})}{(\aa+1)(2\lm)^{\aa+2} r^{\aa+1}}. \lb{1.13}\ee
Define $\bar{r}$ as the only positive root $\Psi'(r) = 0$ by
\be \bar r = \frac{1}{2\lm}\left(\frac{\bb(N2^{\bb+2}+2\lm^{\bb+2})}
{\aa(N2^{\aa+2}+2\lm^{\aa+2})}\right)^{\frac{1}{\bb-\aa}}.\lb{1.14}\ee
\begin{theorem}
    When $0<\aa<\bb$ and $N \geq 2$, there exists $\lm_0$ given by Lemma 2.3 such that for $\lm \geq \lm_0$, $q(t) = r(t)E(t)q_0$ satisfying (\ref{1.12}) is the homographic solution of (\ref{1.2}) where $r(t)$ is the solution of following Hamiltonian system with given Hamiltonian energy 
    \be \left\{\ba{cc}
    &\ddot{r} = -\nabla \Psi(r)= \frac{N\bb}{\lm^{\bb+2} r^{\bb+1}}-\frac{N\aa}{\lm^{\aa+2} r^{\aa+1}} + \frac{2\bb}{2^{\bb+2}r^{\bb+1}}- \frac{2\aa}{2^{\aa+2}r^{\aa+1}},  \\
    &H(r, \dot{r};a,b) = h,  \\
    &r(\tau)=r(0), \quad \dot{r}(\tau)=\dot{r}(0), \ea\right.\ee
    where $h$ satisfies $\Psi(\bar r)< h  < \Psi(\Lm)$, $\om(t)$ is defined by $E(t)$ and satisfying 
    \be \om(t)=\pm \int_{0}^{t} \;\sqrt{\frac{(N-2)\bb}{(\lm r)^{\bb+2}}-\frac{(N-2)\aa}{(\lm r)^{\aa+2}}+ \frac{2\bb}{(2r)^{\bb+2}}- \frac{2\aa}{(2r)^{\aa+2}}-\frac{\th_{\bb}}{(\phi r)^{\bb+2}}+\frac{\th_{\aa}}{(\phi r)^{\aa+2}}} \;\d t,
    \lb{1.16}\ee
    $\Lm$ is given by 
    \be \Lm \equiv \frac{1}{2\lm\phi} \left(\frac{\bb\big((N-2)(2\phi)^{\bb+2} + 2(\lm\phi)^{\bb+2}-\th_{\bb}(2\lm)^{\bb+2}\big)}{\aa\big((N-2)(2\phi)^{\aa+2} + 2(\lm\phi)^{\aa+2}-\th_{\aa}(2\lm)^{\aa+2}\big)}\right)^{\frac{1}{\bb-\aa}} .\lb{1.17}\ee
\end{theorem}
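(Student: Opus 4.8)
The plan is to substitute the ansatz $q(t)=r(t)E(t)q_0$ into the equations of motion (\ref{1.2}) and use the rigid symmetry of $q_0$ to collapse the $3(2+N)$ scalar equations to a few coupled scalar relations for $r$ and $\om$. First I would record the geometry for $r_0\equiv 1$: the poles $Q_1,Q_2$ lie on the $z$-axis, which $E(t)$ fixes, so $q_1(t)=(0,0,r(t))$ and $q_2(t)=(0,0,-r(t))$; the other $N$ masses occupy the vertices of a regular $N$-gon in the $xy$-plane of radius $\phi r(t)$, $\phi=\sqrt{\lm^2-1}$, turned through $\om(t)$. Along the motion the pole–pole distance is $2r$, every pole–vertex distance is $\sqrt{\phi^2+1}\,r=\lm r$, and all $N$-gon chords scale like $r$; this is precisely the structure that makes every force field explicit.

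Now plug in. For a pole, $\ddot q_1=(0,0,\ddot r)$, and the force on $q_1$ is purely vertical since the in-plane pulls of the $N$-gon cancel (its vertices sum to $0$) and all pole–vertex distances coincide; its $z$-component is exactly $-\nabla\Psi(r)$ as written in the statement, so $\ddot r=-\nabla\Psi(r)$. For an $N$-gon vertex $q_k$ I would write its in-plane position as $z_k(t)=\phi r(t)\,e^{\ii(\om(t)+\om_k)}$ and use the classical regular-polygon identity — $\sum_{l\ne k}|q_k-q_l|^{-\ga-2}(q_k-q_l)$ is a real multiple $S_\ga(\phi r)^{-\ga-1}$ of the outward unit vector at $q_k$, with $S_\ga$ essentially $\th_\ga$ of \eqref{2.2} — together with the fact that the two poles pull $q_k$ straight toward the axis. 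This exhibits the total force on $q_k$ as a \emph{central} force $g(r)z_k$ (so $q_0$ is a ``central configuration'' in the weak sense that each gradient is proportional to the corresponding position, though not with a common constant). Splitting $\ddot z_k=g(r)z_k$ into radial and angular parts, the radial part combined with $\ddot r=-\nabla\Psi(r)$ gives $\dot\om^2=\chi(r)$ with $\chi(r)$ exactly the radicand of (\ref{1.16}), i.e.\ equation (\ref{1.16}); the angular part is the relation $\tfrac{\d}{\d t}(r^2\dot\om)=0$, which then has to be reconciled with the previous two.

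With $r(t),\om(t)$ so prescribed, the remainder is a one-degree-of-freedom phase-plane analysis. The potential $\Psi(r)=Ar^{-\bb}-Br^{-\aa}$ has $A,B>0$, so $\Psi\to+\infty$ as $r\to0^+$, $\Psi\to0^-$ as $r\to\infty$, and $\Psi$ has a unique critical point $\bar r$ (formula (\ref{1.14})), a nondegenerate minimum with $\Psi(\bar r)<0$; hence for every energy $h$ with $\Psi(\bar r)<h<0$ the level set $\{\tfrac12\dot r^2+\Psi(r)=h\}$ is a closed curve and the Hamiltonian system has a periodic solution $r(t)$ oscillating between turning points $0<r_-<\bar r<r_+$. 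For $\om(t)$ to be real I need $\chi(r)$ — again of the form $P_\bb r^{-\bb-2}-P_\aa r^{-\aa-2}$, with $P_\aa,P_\bb>0$ once $\lm\ge\lm_0$ (Lemma 2.3) — positive along the orbit; $\chi$ vanishes at the unique root $\Lm$ of (\ref{1.17}) and is positive exactly on $(0,\Lm)$. Since $\Psi$ is strictly increasing on $[\bar r,\infty)$ and $\bar r<\Lm$ for $\lm\ge\lm_0$ (Lemma 2.3), the range $(\Psi(\bar r),\Psi(\Lm))$ is nonempty, and for $h$ in it $\Psi(r_+)=h<\Psi(\Lm)$ forces $r_+<\Lm$; thus $r(t)\in[r_-,r_+]\subset(0,\Lm)$, $\chi(r(t))>0$, $\om(t)$ from (\ref{1.16}) is a well-defined smooth function, and $r(t)>0$ throughout gives $q(t)=r(t)E(t)q_0\in X$ — a collision-free homographic (scaling $\times$ rotation of the fixed shape $q_0$) solution.

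The step I expect to be the real obstacle is checking that this decoupling is \emph{exact}: that the single pair $(r(t),\om(t))$ satisfies \emph{every} component of (\ref{1.2}), in particular the angular component $\tfrac{\d}{\d t}(r^2\dot\om)=0$ of the $N$-gon vertices, which is a genuine constraint beyond the two relations used to define $r$ and $\om$ (and which is automatic only because $q_0$ falls just short of being a true common-ratio central configuration). Pinning down why it holds — the precise arithmetic tying $\Psi$, $\chi$, $\th_\aa,\th_\bb$ and the threshold $\lm_0$ of Lemma 2.3 together — is the heart of the proof; the polygon-force bookkeeping and the phase-plane argument above are then routine.
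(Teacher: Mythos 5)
Your reduction is the same one the paper uses: the two axis bodies give $\ddot r=-\Psi'(r)$, the ring bodies give the radial relation which, combined with it, yields $\dot\om^2=\chi(r)$ with $\chi$ the radicand of (1.16), and the phase-plane analysis together with Lemma 2.3 (which guarantees $\bar r<\Lm$ for $\lm\ge\lm_0$, so the energy window $(\Psi(\bar r),\Psi(\Lm))$ is nonempty and the oscillation stays in $(0,\Lm)$ where $\chi>0$) gives the periodic $r(t)$. The problem is the step you flag at the end and do not carry out: the tangential component of the ring bodies' equations, $2\dot r\dot\om+r\ddot\om=0$, i.e.\ $r^2\dot\om\equiv c$. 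This is a genuine gap, and it cannot be closed by the ``precise arithmetic'' you hope for. If $\dot\om=\pm\sqrt{\chi(r)}$ as in (1.16) and also $r^2\dot\om=c$, then $r^4\chi(r)=c^2$ along the orbit; but $r^4\chi(r)=C_{\bb}r^{2-\bb}-C_{\aa}r^{2-\aa}$ for fixed constants $C_{\aa},C_{\bb}$, which is not constant in $r$ when $0<\aa<\bb$, so an $r(t)$ that genuinely oscillates through an interval $[r_-,r_+]$ cannot satisfy both relations. In other words, the three scalar conditions forced by the ansatz --- $\ddot r=-\Psi'(r)$ from the axis bodies, $\dot\om^2=\chi(r)$ from the radial part of the ring bodies, and conservation of $r^2\dot\om$ from their tangential part --- overdetermine the pair $(r,\om)$ and are simultaneously solvable only when $r$ is constant, i.e.\ in the circular case of Theorem 1.1; no choice of $\lm_0$ repairs this.

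For your information, the paper's own proof does not resolve this either: it writes $A(t)=r^{-1}E^{-1}(\ddot rE+2\dot r\dot E+r\ddot E)$ as a diagonal matrix, which silently discards exactly the off-diagonal entries $\pm(2\dot r\dot\om+r\ddot\om)$, and then reads off (3.78)--(3.79) from the surviving diagonal terms. So your instinct that the exactness of the decoupling is the heart of the matter is correct, but as written your proposal is incomplete precisely there, and the deferred tangential equation is in fact inconsistent with the two relations used to define $r$ and $\om$ for any non-circular motion of this form.
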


It is well known that in the celestial mechanics, if the configuration is flat, the solution must be planar, i.e., the motion must be in a given fixed plane; if the configuration is 3-dimensional, the corresponding solution must be homothetic. In Theorem 1.1 and Theorem 1.3, if $N = 2$, the homographic solution is flat but it is not planar; if $N \geq 3$, the configuration is 3-dimensional, and the corresponding solution is circular but not homothetic. Theses results show the essiential differences between the Newton's law of universal gravitation and the L-J potential.

\section{The Homographic Solutions}
\setcounter{figure}{0}
\setcounter{equation}{0}
\subsection{The circular homographic solutions of the $(2+N)$-and $(3+N)$-body cases}
In this subsection, we consider (\ref{1.5}). 
The shape of configuration $q$ is uniquely determined by $\lm$. We define 
$\th_{\bb} $ and $\th_{\aa}$ by
\be \th_{\bb} =  \frac{\bb}{2} \sum_{j=1}^{N-1}\frac{1}{|1- e^{\ii\omega_{j}}|^{\bb}}, \quad
\th_{\aa} =  \frac{\aa}{2} \sum_{j=1}^{N-1}\frac{1}{|1- e^{\ii\omega_{j}}|^{\aa}}. \lb{2.2}\ee
where $\ii^2 = -1$.
By direct computations, we have following lemma.

\begin{lemma}
    For any given $0 < \aa < \bb$, $N \geq 2$, $G_1(\lm)$ and $G_2(\lm)$ have following properties.
    
    (i) $G_1(\lm)$ and $G_2(\lm)$ are a monotonically increasing functions in $\lm$ when $\lm \geq 2$;
    
    (ii) $\lim_{\lm\to\infty} G_1(\lm) = \lim_{\lm\to\infty} G_2(\lm) =\infty$;
    
    (iii) there exists a $\lm_1=\lm_1(N) >0 $ such that for any $\lm \geq \lm_1$,
    \be G_1(\lm) \geq \left(\frac{\th_{\bb}}{\th_{\aa}}\right)^{\frac{1}{\bb-\aa}};
    \lb{3.22}\ee
    
    (iv) there exists a $\lm_2 = \lm_2(N) >0 $ such that for any $\lm\geq \lm_2$, we have
    \be G_2(\lm) \geq \max\l\{\left(\frac{\th_{\bb}}{\th_{\aa}}\right)^{\frac{1}{\bb-\aa}},
    \left(\frac{\bb}{\aa}\right)^{\frac{1}{\bb-\aa}}\r\}. \lb{3.46}\ee
\end{lemma}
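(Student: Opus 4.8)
The plan is to treat $G_1$ and $G_2$ in parallel by writing each in the form $G_i(\lm)=f(\lm)\,g_i(\lm)^{1/(\bb-\aa)}$, where $f(\lm)=\frac{\sqrt{\lm^2-1}}{2\lm}=\frac12\sqrt{1-\lm^{-2}}$ and $g_i(\lm)$ is the rational function inside the large parentheses (for $G_1$ this is the bracketed ratio in \eqref{1.8}; for $G_2$ the analogous ratio from Proposition 1.2). The point of this factorization is that $f$ is manifestly positive and strictly increasing on $(1,\infty)$, that $g_i$ turns out positive, and that $x\mapsto x^{1/(\bb-\aa)}$ is increasing on $(0,\infty)$ since $\bb>\aa$; so $G_i$ is a product of positive increasing functions, and the whole of (i) reduces to showing each $g_i$ is strictly increasing on $[2,\infty)$.

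For (i) I would put each $g_i$ in the shape $g_i(\lm)=\ka_i\,\frac{\lm^{\bb+2}+A_i}{\lm^{\aa+2}+B_i}$ with positive constants $\ka_i,A_i,B_i$: for $G_1$, $\ka_1=\bb/\aa$, $A_1=2^{\bb+1}N$, $B_1=2^{\aa+1}N$; for $G_2$ one first factors the leading coefficients $2^{\bb+2}+2$ and $2^{\aa+2}+2$ out of numerator and denominator, giving $A_2=\frac{2^{\bb+2}N}{2^{\bb+2}+2}$, $B_2=\frac{2^{\aa+2}N}{2^{\aa+2}+2}$. The quotient rule shows the numerator of $g_i'$ equals, up to the positive factor $\ka_i$,
\[
(\bb-\aa)\,\lm^{\aa+\bb+3}\;+\;\lm^{\aa+1}\Big[(\bb+2)B_i\,\lm^{\bb-\aa}-(\aa+2)A_i\Big].
\]
The first summand is positive because $\bb>\aa$. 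For the bracket one checks the elementary inequality $A_i\le 2^{\bb-\aa}B_i$ (it is an equality for $G_1$, and for $G_2$ it follows from $2^{\aa+2}+2<2^{\bb+2}+2$), so for $\lm\ge 2$ we get $(\bb+2)B_i\lm^{\bb-\aa}\ge(\bb+2)B_i 2^{\bb-\aa}\ge(\bb+2)A_i>(\aa+2)A_i$ and the bracket is nonnegative. Hence $g_i'>0$ on $[2,\infty)$, which gives (i). Everything here is a direct computation; the only place I expect to need a little care — the main, and fairly mild, obstacle — is keeping track of the clumsier coefficients of $G_2$ when verifying the sign of this bracket.

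For (ii): as $\lm\to\infty$ one has $f(\lm)\to\frac12$, while $g_i(\lm)$ is asymptotic to $c_i\,\lm^{\bb-\aa}$ with $c_1=\bb/\aa$ and $c_2=\frac{\bb(2^{\bb+2}+2)}{\aa(2^{\aa+2}+2)}$, both positive, so $g_i(\lm)^{1/(\bb-\aa)}\sim c_i^{1/(\bb-\aa)}\lm\to\infty$ and therefore $G_i(\lm)\to\infty$.

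Finally, (iii) and (iv) are immediate from (i) and (ii). The quantities $\th_\aa,\th_\bb$ in \eqref{2.2} are finite sums of strictly positive terms for $N\ge 2$ (since $e^{\ii\om_j}\ne 1$ for $1\le j\le N-1$), so $(\th_\bb/\th_\aa)^{1/(\bb-\aa)}$ is a well-defined positive number depending only on $N$. Since $G_1$ is continuous and strictly increasing on $[2,\infty)$ with $G_1(\lm)\to\infty$, its range there is $[G_1(2),\infty)$, so there is a smallest $\lm_1=\lm_1(N)\ge 2$ with $G_1(\lm_1)\ge(\th_\bb/\th_\aa)^{1/(\bb-\aa)}$, and by monotonicity \eqref{3.22} then holds for every $\lm\ge\lm_1$. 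The same argument applied to $G_2$ and to the constant $\max\{(\th_\bb/\th_\aa)^{1/(\bb-\aa)},(\bb/\aa)^{1/(\bb-\aa)}\}$ produces $\lm_2=\lm_2(N)$ and \eqref{3.46}.
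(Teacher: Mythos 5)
Your proof is correct, and it simply carries out the ``direct computations'' that the paper invokes without writing down: the factorization $G_i(\lm)=f(\lm)\,g_i(\lm)^{1/(\bb-\aa)}$, the quotient-rule sign check using $A_i\le 2^{\bb-\aa}B_i$ for $\lm\ge 2$, the asymptotics $g_i(\lm)\sim c_i\lm^{\bb-\aa}$, and the deduction of (iii)--(iv) from monotonicity and divergence are all accurate. Since the paper gives no explicit proof of Lemma 2.1, your argument is a faithful and complete version of the intended elementary verification.
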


Then we give the proof of Theorem 1.1.

{\bf Proof of Theorem 1.1.}
    To prove $q = (Q_1, Q_2, q_1, ..., q_N)$ satisfies (\ref{1.6}), we calculate $\frac{\partial U}{\partial Q_j}$ for $j=1, 2$ and $\frac{\partial U}{\partial q_k}$ for $1\leq k \leq N$. For $i, j\in \{1,2\}$, we have 
    \bea
    \frac{\partial U}{\partial Q_j}= \sum_{k= 1}^N \left(\frac{\aa(Q_j -q_k)}{|Q_j -q_{k}|^{\aa+2}}-\frac{\bb(Q_j -q_k)}{|Q_j -q_{k}|^{\bb+2}}\right) + \left( \frac{\aa(Q_j -Q_i)}{|Q_{j}- Q_i|^{\aa+2}}-\frac{\bb(Q_j -Q_i)}{|Q_{j} - Q_i|^{\bb+2}}\right). \lb{2.4}
    \eea
    Since $Q_1 = (0, 0,r_0)$, $Q_2 = (0,0, -r_0)$ and $q_k = \phi(\lm)r_0( \cos\omega_k,\sin\omega_k,0)$, we have that
    \bea \sum_{k= 1}^N \left(\frac{\aa}{|Q_j -q_{k}|^{\aa+2}}-\frac{\bb}{|Q_j -q_{k}|^{\bb+2}}\right)(Q_j -q_k)=  \left(\frac{N\aa}{|\lm r_0|^{\aa+2}}-\frac{N\bb}{|\lm r_0|^{\bb+2}}\right)Q_j, \lb{2.5}\eea
    and
    \bea \left(\frac{\aa}{|Q_{j}- Q_i|^{\aa+2}}-\frac{\bb}{|Q_{j} - Q_i|^{\bb+2}}\right)(Q_j -Q_i) =
    \left(\frac{2\aa}{|2r_0|^{\aa+2}}-\frac{2\bb}{|2r_0|^{\bb+2}}\right)Q_j.
    \lb{2.6}\eea
    Note that (\ref{2.5}) and (\ref{2.6}) yield that for $j = 1$ or $2$,
    \be \frac{\partial U}{\partial Q_j} =   \left(\frac{N\aa}{|\lm r_0|^{\aa+2}} -\frac{N\bb}{|\lm r_0|^{\bb+2}}+ \frac{2\aa}{|2r_0|^{\aa+2}}-\frac{2\bb}{|2r_0|^{\bb+2}}\right)Q_j. \lb{2.7} \ee
    Note that for $j=1,2$,
    \be\diag(-\om_0^2, -\om_0^2, 0)Q_j = 0.\ee
    We have following equality holds.
    \be  \frac{\partial U}{\partial Q_j}  = \frac{N\aa}{|\lm r_0|^{\aa+2}}-\frac{N\bb}{|\lm r_0|^{\bb+2}}+ \frac{2\aa}{|2r_0|^{\aa+2}}-\frac{2\bb}{|2r_0|^{\bb+2}}= 0. \lb{3.33}\ee
    By (\ref{3.33}), we obtain that
    \be r_0 = r(\lm) \equiv \frac{1}{2\lm}\left(\frac{\bb(2\lm^{\bb +2} + 2^{2+\bb}N)}{\aa(2 \lm^{\aa +2}+ 2^{\aa+2}N)}\right)^{\frac{1}{\bb-\aa}}. \lb{3.34}\ee
    Note that $\lm > 0$ is needed up to now.
    By (\ref{1.8}), we have that $r(\lm) = \frac{G_1(\lm)}{\sqrt{\lm^2-1}}$ when $\lm \geq 1$.
    
    Note that $\frac{\partial U}{\partial q_k}$ can be calculated by
    \bea \frac{\partial U}{\partial q_k} = \sum_{j\neq k} \left(\frac{\aa}{|q_{kj}|^{\aa+2}}-\frac{\bb}{|q_{kj}|^{\bb+2}}\right)q_{kj} +\sum_{j= 1}^2 \left(\frac{\aa(q_k -Q_j)}{|q_{k}- Q_j|^{\aa+2}}-\frac{\bb(q_k -Q_j)}{|q_{k} - Q_j|^{\bb+2}}\right).\lb{2.11}
    \eea
    Because following discussion is in $xy$-plane, we identify the $xy$-plane with $\C$. Therefore, $q_k =\phi(\lm)r_0e^{\ii\om_k}$ and $q_{kj}= \phi(\lm)r_0 e^{\ii\om_k}(1- e^{\ii\om_{j-k}})$. Hence, we have that
    \begin{align}
    \sum_{j\neq k} \left(\frac{\aa}{|q_{kj}|^{\aa+2}}-\frac{\bb}{|q_{kj}|^{\bb+2}}\right)q_{kj}
    &=  \sum_{j\neq k} \left(\frac{\aa(1- e^{\ii\om_{j-k}})q_k}{(\phi r_0)^{\aa+2}|1- e^{\ii\om_{j-k}} |^{\aa+2}}-\frac{\bb(1- e^{\ii\om_{j-k}})q_k}{(\phi r_0)^{\bb+2}|1- e^{\ii\om_{j-k}} |^{\bb+2}}\right)\nn\\
    &=\sum_{j=1}^{N-1} \left(\frac{\aa(1- e^{\ii\om_{j}})q_{k}}{(\phi r_0)^{\aa+2}|1- e^{\ii\om_{j}} |^{\aa+2}}-\frac{\bb(1- e^{\ii\om_{j}})q_{k}}{(\phi r_0)^{\bb+2}|1- e^{\ii\om_{j}} |^{\bb+2}}\right). \lb{3.36}\end{align}
    Note that $|1-e^{\ii\om_{j}}| = |1-e^{\ii\om_{N-j}}| = \sqrt{2 - 2\cos\om_j}$ and $(1-e^{\ii\om_{j}})+(1-e^{\ii\om_{N-j}}) = 2 - 2\cos\om_j$. Then we have
    \be \sum_{j=1}^{N-1} \frac{1- e^{\ii\om_{j}}}{|1- e^{\ii\om_{j}} |^{\aa+2}} = \frac{1}{2} \left(\sum_{j=1}^{N-1} \frac{1- e^{\ii\om_{j}}}{|1- e^{\ii\om_{j}} |^{\aa+2}} + \sum_{j=1}^{N-1} \frac{1- e^{\ii\om_{N-j}}}{|1- e^{\ii\om_{N-j}} |^{\aa+2}}\right) = \frac{1}{2} \left(\sum_{j=1}^{N-1} \frac{1}{|1- e^{\ii\om_{j}} |^{\aa}} \right). \ee
    Therefore, we can reduce (\ref{3.36}) to
    \bea \sum_{j\neq k} \left(\frac{\aa}{|q_{kj}|^{\aa+2}}-\frac{\bb}{|q_{kj}|^{\bb+2}}\right)q_{kj}&=&\frac{1}{2}\sum_{j =1}^{N-1} \left(\frac{\aa}{(\phi r_0)^{\aa+2}|1- e^{\ii\om_{j}} |^{\aa}}-\frac{\bb}{(\phi r_0)^{\bb+2}|1- e^{\ii\om_{j}}|^{\bb}}\right)q_{k} \nn\\
    &=&\l(\frac{\th_{\aa}}{(\phi r_0)^{\aa+2}} -\frac{\th_{\bb}}{(\phi r_0)^{\bb+2}}\r)q_{k}, \eea
    where $\th_{\bb}$ and $\th_{\aa} $ are given by (\ref{2.2}).
    The second summation of (\ref{2.11}) can be simplified as
    \bea 
    \sum_{j= 1}^2 \left(\frac{\aa(q_k -Q_j)}{|q_{k}- Q_j|^{\aa+2}}-\frac{\bb(q_k -Q_j)}{|q_{k} - Q_j|^{\bb+2}}\right) 
    &=& \left(\frac{\aa}{(\lm r_0)^{\aa+2}}-\frac{\bb}{(\lm r_0)^{\bb+2}}\right)(q_k -Q_1 + q_k - Q_2) \nn\\
    &=& \left(\frac{2\aa}{(\lm r_0)^{\aa+2}}- \frac{2\bb}{(\lm r_0)^{\bb+2}}\right)q_k,   \eea
    Then (\ref{2.11}) is reduced to
    \be \frac{\partial U}{\partial q_k} =\left( \frac{\th_{\aa}}{(\phi r_0)^{\aa+2}}-\frac{\th_{\bb}}{(\phi r_0)^{\bb+2}} + \frac{2\aa}{(\lm r_0)^{\aa+2}}-\frac{2\bb}{(\lm r_0)^{\bb+2}}\right)q_k. \lb{2.16}\ee
    By $\diag(-\om_0^2, -\om_0^2, 0 )q_k +\frac{\partial U(q)}{\partial q_k} = 0$, we have that
    \be \omega_0^2 = \frac{\th_{\aa}}{(\phi r_0)^{\aa+2}}- \frac{\th_{\bb}}{(\phi r_0)^{\bb+2}} + \frac{2\aa}{(\lm r_0)^{\aa+2}}-\frac{2\bb}{(\lm r_0)^{\bb+2}}.\lb{2.17}\ee
    By $\lm \geq 2$ and (\ref{3.34}),  we have that
    $$\lm r_0 = \frac{1}{2}\left(\frac{\bb(2\lm^{\bb +2} + 2^{2+\bb}N)}{\aa(2 \lm^{\aa +2}+ 2^{\aa+2}N)}\right)^{\frac{1}{\bb-\aa}} \geq \left(\frac{\bb}{\aa}\right)^{\frac{1}{\bb-\aa}}.$$
    This yields that
    \be \frac{2\aa}{(\lm r_0)^{\aa+2}}-\frac{2\bb}{(\lm r_0)^{\bb+2}} \geq 0. \lb{2.18}\ee
    
    By Lemma 2.1, we have that there exists a $\lm_1 = \lm_1(N)$ which depends on $N$ such that for any $\lm \geq \lm_1$, $\phi(\lm) r_0 = G_1(\lm)\geq \left(\frac{\th_{\bb}}{\th_{\aa}}\right)^{\frac{1}{\bb-\aa}}$. This yields
    \be \frac{\th_{\aa}}{(\phi r_0)^{\aa+2}}- \frac{\th_{\bb}}{(\phi r_0)^{\bb+2}} \geq 0. \lb{2.19}\ee
    Hence, by (\ref{2.18}) and (\ref{2.19}), $\om_0$ satisfying (\ref{2.17}) is well-defined for any $\lm \geq \max\{2, \lm_1\}$.
    
    Then $q(t) = E_{\om_0}(t)q$ is a homographic solution of the system (\ref{1.2}) where $q$ satisfies (\ref{1.5}),
    and $\om_0$ is given by (\ref{2.17}). \hfill$\square$

Readers may verify Proposition 1.2 by Lemma 2.1 following the proof of Theorem 1.1. We omit the proof here.

When $N = 2$, the solution is a flat non-planar solution. There is a plane $\Pi(t)$ in $\R^3$ such that $q(t) \in \Pi(t)$  for all $t\in S_{\tau}$ pair-wisely different. In the celestial mechanics, there does not exist any flat non-planar solution.

\begin{corollary} 
    (i) (The (2+2)-body problem) 
    When $0<\aa<\bb$, $N =2$ and $\lm\geq 2$, $q(t)$ satisfying (\ref{1.5}) is the non-planar solution of (\ref{1.2}). Furthermore, the configuration is always a rhombus.
    
    (ii)  (The (3+2)-body problem) When $N =2$, $\lm\geq \max\{2, \lm_2\}$, $q(t)$  satisfying (\ref{1.9}) is a non-planar solution of (\ref{1.2}). Especially, the configuration of $q(t)$ is always a rhombus where $Q_3$ is at the center of this rhombus.
\end{corollary}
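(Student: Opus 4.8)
Both statements are the $N=2$ specializations of Theorem 1.1 and Proposition 1.2, so the plan is to (a) check that for $N=2$ the hypothesis of Theorem 1.1 collapses to $\lm\ge2$, (b) check that the rotation speed $\om_0$ it produces is nonzero, and (c) read off the shape of the configuration $q$ and of its orbit under $E_{\om_0}(t)$. For part (ii), step (a) is unnecessary: the stated hypothesis $\lm\ge\max\{2,\lm_2\}$ is exactly that of Proposition 1.2, which I would simply invoke.

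For (a) I would note that with $N=2$ one has $\om_1=\pi$, hence $|1-e^{\ii\om_1}|=2$, so by \eqref{2.2} $\th_{\aa}=\aa/2^{\aa+1}$, $\th_{\bb}=\bb/2^{\bb+1}$, and therefore $(\th_{\bb}/\th_{\aa})^{1/(\bb-\aa)}=\tfrac12(\bb/\aa)^{1/(\bb-\aa)}$. The proof of Theorem 1.1 already shows, for every $N$, that $\lm\ge2$ forces $\lm r_0\ge(\bb/\aa)^{1/(\bb-\aa)}$, which is \eqref{2.18}. Since $\phi(\lm)r_0=\frac{\sqrt{\lm^2-1}}{\lm}\cdot\lm r_0$ and $\sqrt{\lm^2-1}/\lm\ge\sqrt3/2>1/2$ when $\lm\ge2$, this yields $\phi(\lm)r_0\ge\tfrac12(\bb/\aa)^{1/(\bb-\aa)}=(\th_{\bb}/\th_{\aa})^{1/(\bb-\aa)}$, i.e. \eqref{2.19} holds for all $\lm\ge2$. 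Hence $\lm_1$ in Lemma 2.1(iii) may be taken $\le2$ for $N=2$, so $\max\{2,\lm_1\}=2$ and Theorem 1.1 applies for every $\lm\ge2$.

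For (b) I would use \eqref{2.17}: $\om_0^2$ is the sum of the (nonnegative) left side of \eqref{2.19} and the (nonnegative) left side of \eqref{2.18}. If $\lm>2$ then $\lm^{\bb-\aa}>2^{\bb-\aa}$ strictly, so the \eqref{2.18}-term is strictly positive; if $\lm=2$ then $\sqrt3/2>1/2$ strictly, so the \eqref{2.19}-term is strictly positive. Either way $\om_0^2>0$, so $\om_0\ne0$; in part (ii) the same computation with $\th_{\aa}+\aa,\ \th_{\bb}+\bb$ in place of $\th_{\aa},\ \th_{\bb}$ — together with Lemma 2.1(iv) and the elementary bound $\big(\tfrac{\th_{\bb}+\bb}{\th_{\aa}+\aa}\big)^{1/(\bb-\aa)}\le\max\{(\tfrac{\th_{\bb}}{\th_{\aa}})^{1/(\bb-\aa)},(\tfrac{\bb}{\aa})^{1/(\bb-\aa)}\}$ — again gives $\om_0\ne0$. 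This is the one place where I would be careful: if that bracket vanished, $q_0$ would be a central configuration and the ``solution'' would simply sit in a fixed plane, so the non-planar claim hinges on strict positivity; and the borderline value $\lm=2$, where \eqref{2.18} degenerates, has to be covered by \eqref{2.19}. Everything else is substitution and plane geometry.

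For (c), with $N=2$, $\om_1=\pi$, $\om_2=2\pi$, the configuration \eqref{1.3} is $Q_1=(0,0,r_0)$, $Q_2=(0,0,-r_0)$, $q_1=(-\phi r_0,0,0)$, $q_2=(\phi r_0,0,0)$, together with $Q_3=(0,0,0)$ in part (ii); these points lie in, and affinely span, the plane $\{y=0\}$. The rotation $E_{\om_0}(t)$ about the $z$-axis sends $q_2$ to $\phi r_0(\cos\om_0t,\sin\om_0t,0)$, which is outside $\{y=0\}$ whenever $\om_0t\notin\pi\Z$, and such $t$ exist because $\om_0\ne0$; hence no fixed plane contains $q(t)$ for all $t$, so the solution is flat but not planar. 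Finally $|Q_1-q_2|^2=r_0^2+\phi^2r_0^2=(1+\lm^2-1)r_0^2=\lm^2r_0^2$, and by the reflection symmetries across the $x$- and $z$-axes all four sides of the quadrilateral $Q_1q_2Q_2q_1$ equal $\lm r_0$, while its diagonals $Q_1Q_2$ and $q_1q_2$ meet orthogonally and bisect each other at the origin; thus the configuration is a rhombus, with $Q_3$ at its center in case (ii), and since $E_{\om_0}(t)$ is a rigid rotation this shape is maintained for all $t$.
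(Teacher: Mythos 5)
Your proposal is correct and takes the same route the paper intends: the corollary is simply the $N=2$ specialization of Theorem 1.1 and Proposition 1.2, which the paper states without proof. Your write-up supplies the details the paper leaves implicit — that for $N=2$ one has $\th_{\aa}=\aa/2^{\aa+1}$, $\th_{\bb}=\bb/2^{\bb+1}$, so $(\th_{\bb}/\th_{\aa})^{1/(\bb-\aa)}=\tfrac12(\bb/\aa)^{1/(\bb-\aa)}$ and $\lm_1$ may be taken $\le 2$ (justifying the hypothesis $\lm\ge 2$ in (i)), that $\om_0^2>0$ strictly (which is indeed what the non-planarity hinges on), and the explicit check that the rotating rhombus $Q_1q_2Q_2q_1$ in the plane $\{y=0\}$ cannot stay in any fixed plane — all of which is consistent with, and a faithful completion of, the paper's argument.
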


\subsection{One General Periodic Homographic Solution}
In this section, we will prove the Theorem 1.3. Before the proof, we need following lemma.
\begin{lemma}
    There exists a $\lm_0$ such that for all $\lm > \lm_0> 0$,  following inequality holds.
     \bea
    && (N2^{\bb+2}+2\lm^{\bb+2})\big((N-2)2^{\aa+2}\phi^{\bb+2} + 2\lm^{\aa+2}\phi^{\bb+2}-\th_{\aa}(2\lm)^{\aa+2}\phi^{\bb-\aa}\big)  \nn\\
    &&<(N2^{\aa+2}+2\lm^{\aa+2})\big((N-2)(2\phi)^{\bb+2} + 2(\lm\phi)^{\bb+2}-\th_{\bb}(2\lm)^{\bb+2}\big).\lb{2.20}
    \eea
\end{lemma}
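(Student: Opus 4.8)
The plan is to treat \eqref{2.20} as an asymptotic inequality in the large parameter $\lm$ and identify the dominant terms on each side. Recall $\phi = \phi(\lm) = \sqrt{\lm^2-1}$, so $\phi^2 = \lm^2 - 1$ and in particular $\phi^{k} = \lm^{k}\bigl(1 + O(\lm^{-2})\bigr)$ for any fixed exponent $k$, while $\th_{\aa}, \th_{\bb}$ and $N$ are constants independent of $\lm$. First I would expand both sides of \eqref{2.20} as polynomials-plus-lower-order in $\lm$. On the left, the product $(N2^{\bb+2}+2\lm^{\bb+2})\cdot\bigl((N-2)2^{\aa+2}\phi^{\bb+2} + 2\lm^{\aa+2}\phi^{\bb+2}-\th_{\aa}(2\lm)^{\aa+2}\phi^{\bb-\aa}\bigr)$ has leading term coming from $2\lm^{\bb+2}\cdot 2\lm^{\aa+2}\phi^{\bb+2} \sim 4\,\lm^{2\bb+\aa+4}$; on the right, $(N2^{\aa+2}+2\lm^{\aa+2})\cdot\bigl((N-2)(2\phi)^{\bb+2} + 2(\lm\phi)^{\bb+2}-\th_{\bb}(2\lm)^{\bb+2}\bigr)$ has leading term from $2\lm^{\aa+2}\cdot 2\lm^{\bb+2}\phi^{\bb+2}\sim 4\,\lm^{2\bb+\aa+4}$ as well. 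So the two sides agree to leading order, and the inequality must be decided at the next order.

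The key step is therefore to compute the coefficient of the next-highest power of $\lm$ on each side (after cancelling the common leading term) and show the right-hand coefficient strictly exceeds the left-hand one. Writing $\phi^{\bb+2} = \lm^{\bb+2} - \tfrac{\bb+2}{2}\lm^{\bb} + \cdots$ and $\phi^{\bb-\aa} = \lm^{\bb-\aa} - \cdots$, one tracks (a) the correction from replacing $\phi^{\bb+2}$ by $\lm^{\bb+2}$, which appears symmetrically on both sides and cancels; (b) the cross terms $N2^{\bb+2}\cdot 2\lm^{\aa+2}\phi^{\bb+2}$ on the left versus $N2^{\aa+2}\cdot 2\lm^{\bb+2}\phi^{\bb+2}$ on the right, whose difference is proportional to $2^{\bb+2}-2^{\aa+2} > 0$ favouring the right side; and (c) the $\th$-terms, $-2\lm^{\bb+2}\cdot\th_{\aa}(2\lm)^{\aa+2}\phi^{\bb-\aa}$ on the left versus $-2\lm^{\aa+2}\cdot\th_{\bb}(2\lm)^{\bb+2}$ on the right, i.e. a comparison of $\th_{\aa}2^{\aa+2}\lm^{\aa+\bb+4+(\bb-\aa)}\phi^{\bb-\aa}\lm^{-(\bb-\aa)}$-type contributions; these land at the same order $\lm^{2\bb+\aa+4}$ actually, so they must be folded into the leading-order comparison and I would redo (a)–(c) keeping the $\th$-terms from the start. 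The cleanest organisation is: collect everything over a common power of $\lm$, factor out $4\lm^{2\bb+\aa+4}$, and reduce \eqref{2.20} to an inequality of the form $1 + f_L(\lm) < 1 + f_R(\lm)$ where $f_L, f_R \to 0$; then show $f_R(\lm) - f_L(\lm) = c\,\lm^{-1} + o(\lm^{-1})$ (or $\lm^{-2}$) with $c > 0$, the positivity of $c$ coming from $2^{\bb+2} > 2^{\aa+2}$ together with the sign of $\th_{\bb} - \th_{\aa}$ (which, from \eqref{2.2}, behaves predictably since each $|1-e^{\ii\om_j}| \le 2$ so the $\bb$-power sum dominates appropriately). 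Existence of $\lm_0$ then follows: pick $\lm_0$ large enough that the $o(\cdot)$ remainder is smaller in absolute value than $c\,\lm^{-1}$ (resp. $c\,\lm^{-2}$).

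The main obstacle I anticipate is bookkeeping: there are three additive blocks in each of the two second factors and two in each first factor, so the product has twelve-ish terms per side at a range of powers $\lm^{j}$ with $j$ running over several consecutive integers mixed with $\phi$-powers, and one must be careful that the $\th$-contributions and the $2^{\bb+2}$-vs-$2^{\aa+2}$ contributions actually sit at the claimed order rather than being swamped or cancelled. A secondary subtlety is confirming the sign of the decisive coefficient $c$ uniformly in the admissible $(\aa,\bb)$ and in $N \ge 2$ — here I would note $N-2 \ge 0$ so that block is harmless, and bound $\th_{\aa} < \th_{\bb}\cdot 2^{\bb-\aa}$ (or a similar explicit inequality extracted from \eqref{2.2}) to pin down the sign. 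If the first subleading difference happens to vanish identically for some $(\aa,\bb)$, one descends one more order; but generically the $2^{\bb+2}-2^{\aa+2}$ term already secures strict inequality for all large $\lm$, which is exactly what the lemma claims.
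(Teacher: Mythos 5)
Your overall strategy (both sides share the identical leading term, so decide the inequality at the next order in $\lm$ and then take $\lm_0$ large) is exactly the paper's, but your bookkeeping of which terms sit at which order is wrong, and since that bookkeeping is the entire content of the lemma, the decisive step is missing. Keep in mind $\phi\sim\lm$, so after the common leading term $4\lm^{\aa+\bb+4}\phi^{\bb+2}$ (order $\lm^{\aa+2\bb+6}$) cancels, the subleading order is $\lm^{2\bb+4}$. At that order the right-hand side contributes only $N2^{\aa+3}\lm^{\bb+2}\phi^{\bb+2}$, while the left-hand side contributes $(N-2)2^{\aa+3}\lm^{\bb+2}\phi^{\bb+2}-\th_{\aa}2^{\aa+3}\lm^{\aa+\bb+4}\phi^{\bb-\aa}$; their difference is
$2^{\aa+4}(\lm\phi)^{\bb+2}+\th_{\aa}2^{\aa+3}\lm^{\aa+\bb+4}\phi^{\bb-\aa}>0$,
which is what forces the inequality for large $\lm$. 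The two comparisons you single out are at mismatched orders: in your item (b), $N2^{\bb+2}\cdot2\lm^{\aa+2}\phi^{\bb+2}$ (left) is of order $\lm^{\aa+\bb+4}$ while $N2^{\aa+2}\cdot2(\lm\phi)^{\bb+2}$ (right) is of order $\lm^{2\bb+4}$, so their "difference proportional to $2^{\bb+2}-2^{\aa+2}$" is not meaningful; in your item (c), the left $\th_{\aa}$-term is of order $\lm^{2\bb+4}$ but the right $\th_{\bb}$-term is only of order $\lm^{\aa+\bb+4}$, so they do not "land at the same order" and no comparison of $\th_{\bb}$ with $\th_{\aa}$ is needed at all. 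Consequently the sign of the decisive coefficient does not come from $2^{\bb+2}>2^{\aa+2}$ or from $\th_{\bb}-\th_{\aa}$, and the block you dismiss as "harmless" (the $N$ versus $N-2$ coefficients of $2^{\aa+3}(\lm\phi)^{\bb+2}$) is, together with the negative $\th_{\aa}$-term on the left, precisely where the strict inequality comes from.

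Two further points. First, the relative size of the gap is $\lm^{-(\aa+2)}$ times the leading term, not $c\lm^{-1}$ or $c\lm^{-2}$, so the quantitative form of your "choose $\lm_0$ so the remainder is smaller" step would also need correcting. Second, the hedge "generically the $2^{\bb+2}-2^{\aa+2}$ term already secures strict inequality" is not acceptable: the lemma must hold for every $0<\aa<\bb$ and every $N\ge 2$, and with the correct subleading comparison above it does, with no case analysis and no descent to lower orders.
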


\begin{proof}
Note that the highest order of $\lm$ on the both sides of (\ref{2.20}) are $4\lm^{\bb+\aa+4} \phi^{\bb+2}$. The second highest order of $\lm$ on the right hand side of (\ref{2.20}) is $N2^{\aa+3}(\lm\phi)^{\bb+2}$ and the second highest order of $\lm$ on the left hand side of (\ref{2.20}) is
$(N-2)2^{\aa+3}\lm^{\bb+2}\phi^{\bb+2}-\th_{\aa}2^{\aa+3}\lm^{\bb+\aa+4}\phi^{\bb-\aa}$.
Note
\be N2^{\aa+3}(\lm\phi)^{\bb+2} > (N-2)2^{\aa+3}(\lm\phi)^{\bb+2}-\th_{\aa}2^{\aa+3}\lm^{\bb+\aa+4}\phi^{\bb-\aa} \ee
is equivalent to
\be 2^{\aa+4}(\lm\phi)^{\bb+2} > -\th_{\aa}2^{\aa+3}\lm^{\bb+\aa+4}\phi^{\bb-\aa},\ee
It holds because the left hand side is positive and the right hand side is negative when $\lm > 0$.  Then we obtain this lemma holds when $\lm_0$ is sufficiently large.
\end{proof}

Now, we give the proof of Theorem 1.3.

{\bf Proof of Theorem 1.3.}
    We plug $q(t)=r(t)E(t)q_0$ defined in \eqref{1.12} into the both sides of (\ref{1.2}) and obtain that the left hand side of (\ref{1.2}) are
    \bea \ddot{Q}_j(t) &=& \left(\ddot{r}(t)E(t) + 2\dot{r}(t)\dot{E}(t)+r(t)\ddot{E}(t)\right)Q_j,\\
    \ddot{q}_k(t) &=& \left(\ddot{r}(t)E(t)+ 2\dot{r}(t)\dot{E}(t)+r(t)\ddot{E}(t)\right)q_k,\eea
    where $1\leq j \leq 2$ and $1\leq k\leq N$ and on the other hand, by (\ref{2.4} -\ref{2.16}), we have
    
    \begin{align}
        \frac{\partial U(q)}{\pt Q_j} &= r(t)E(t)
        \left(\frac{N\aa}{(\lm r)^{\aa+2}} -\frac{N\bb}{(\lm r)^{\bb+2}}+ \frac{2\aa}{(2r)^{\aa+2}}-\frac{2\bb}{(2r(t))^{\bb+2}}\right)Q_j, \lb{3.75}
        \\
        \frac{\partial U(q)}{\pt q_k} &= r(t)E(t) \left( \frac{\th_{\aa}}{(\phi r)^{\aa+2}}-\frac{\th_{\bb}}{(\phi r)^{\bb+2}} + \frac{2\aa}{(\lm r)^{\aa+2}}-\frac{2\bb}{(\lm r)^{\bb+2}}\right)q_k.\lb{3.76}
    \end{align}
    We define $A(t)$ by
    \bea A(t) \equiv r^{-1}(t)E^{-1}(t)(\ddot{r}E(t) + 2\dot{r}\dot{E}(t)+r(t)\ddot{E}(t))= r^{-1}(t)\left(\begin{matrix}
        \ddot{r}-r\dot{\om}^2 & 0                & 0 \\
        0               & \ddot{r}-r\dot{\om}^2  & 0 \\
        0               & 0           & \ddot{r}
    \end{matrix}\right).\eea
    Then $\ddot{Q}_j(t)= r(t)E(t) A(t)Q_{j}= -\frac{\partial U(q(t))}{\partial Q_j}$ and (\ref{3.75}) yield that
    \be \ddot{r} = \frac{N\bb}{\lm^{\bb+2} r^{\bb+1}}-\frac{N\aa}{\lm^{\aa+2} r^{\aa+1}} + \frac{2\bb}{2^{\bb+2}r^{\bb+1}}- \frac{2\aa}{2^{\aa+2}r^{\aa+1}}. \lb{3.78}\ee
    By $\ddot{q}_k =r(t)E(t)A(t)q_k = -\frac{\partial U(q(t))}{q_k}$, (\ref{3.76}) and (\ref{3.78}) yield that
    \be \ddot{r}-r\dot{\om}^2 = \frac{\th_{\bb}}{\phi^{\bb+2} r^{\bb+1}}-\frac{\th_{\aa}}{\phi^{\aa+2} r^{\aa+1}}+\frac{2\bb}{\lm^{\bb+2} r^{\bb+1}}- \frac{2\aa}{\lm^{\aa+2} r^{\aa+1}}. \lb{3.79}\ee
    By (\ref{3.78}) and (\ref{3.79}), we have that
    \be \dot{\om}^2 = \frac{(N-2)\bb}{(\lm r)^{\bb+2}}-\frac{(N-2)\aa}{(\lm r)^{\aa+2}}+ \frac{2\bb}{(2r)^{\bb+2}}- \frac{2\aa}{(2r)^{\aa+2}}-\frac{\th_{\bb}}{(\phi r)^{\bb+2}}+\frac{\th_{\aa}}{(\phi r)^{\aa+2}}.\ee
    If $q(t)$ is the solution of (\ref{1.2}), $\dot{\om}^2 \geq 0$ is a necessary condition, i.e., 
    \be \frac{(N-2)\bb}{(\lm r)^{\bb+2}}-\frac{(N-2)\aa}{(\lm r)^{\aa+2}}+ \frac{2\bb}{(2r)^{\bb+2}}- \frac{2\aa}{(2r)^{\aa+2}}-\frac{\th_{\bb}}{(\phi r)^{\bb+2}}+\frac{\th_{\aa}}{(\phi r)^{\aa+2}} \geq 0 . \lb{3.83}\ee
    Note that (\ref{3.83}) is equivalent to for all $t\in \R$,
    \bea r \leq \Lm(\lm) \equiv \frac{1}{2\lm\phi} \left(\frac{\bb\big((N-2)(2\phi)^{\bb+2} + 2(\lm\phi)^{\bb+2}-\th_{\bb}(2\lm)^{\bb+2}\big)}{\aa\big((N-2)(2\phi)^{\aa+2} + 2(\lm\phi)^{\aa+2}-\th_{\aa}(2\lm)^{\aa+2}\big)}\right)^{\frac{1}{\bb-\aa}}.\eea

    Note that (\ref{3.78}) can be simplified to
    \be \ddot{r} \;=\; -\Psi'(r) \; = \; \frac{\bb(N2^{\bb+2}+2\lm^{\bb+2})}{(2\lm r)^{\bb+2}}-\frac{\aa(N2^{\aa+2}+2\lm^{\aa+2})}{(2\lm r)^{\aa+2}},  \lb{3.81}\ee
    where $\Psi$ is define by (\ref{1.13}).
    Note that $\bar r$ given by (\ref{1.14}) is the only positive root of $\Psi'(\bar r) = 0 $.
    
    As the discussion Theorem 4.1 of \cite{LiuLongZeng2018}, there is an $\tau$-periodic oscillating solution of $r(t)> 0$ for any given Hamiltonian energy $\Psi(\bar{r}) < h <0 $. For any solution $r(t)$ which is not the constant solution, i.e., $r(t) \not\equiv \bar{r}$, we must have $t_1, t_2 \in [0,\tau]$ such that $r(t_1) \leq \bar{r}$ and $r(t_2) \geq \bar{r}$

    If there exists some $\lm_0$ such that $\Lm(\lm_0) > \bar{r}$, then  $r(t)$, which is the periodic solution of (\ref{3.81}), satisfies $\max_{t\in S_{\tau}} r(t) < \Lm(\lm_0)$ for the given Hamiltonian energy $h \in (\Psi(\bar r), \Psi(\Lm(\lm_0)))$. Then for $\lm_0$, $q(t) = r(t)E(\om(t)) q$ is the solution of the system (\ref{1.2}).
    
    Next, we prove the existence of $\lm_0$ which yields for any $\lm \geq \lm_0$, following inequality holds.
    \be \Lm(\lm) >\bar r  . \lb{2.34}\ee
    Note that (\ref{2.34}) holds is equivalent to
    \bea  \frac{1}{2\lm\phi} \left(\frac{\bb\big((N-2)(2\phi)^{\bb+2} + 2(\lm\phi)^{\bb+2}-\th_{\bb}(2\lm)^{\bb+2}\big)}{\aa\big((N-2)(2\phi)^{\aa+2} + 2(\lm\phi)^{\aa+2}-\th_{\aa}(2\lm)^{\aa+2}\big)}\right)^{\frac{1}{\bb-\aa}}>
    \frac{1}{2\lm}\left(\frac{\bb(N2^{\bb+2}+2\lm^{\bb+2})}
    {\aa(N2^{\aa+2}+2\lm^{\aa+2})}\right)^{\frac{1}{\bb-\aa}}.\lb{2.35}\eea
    By direct computations, we have that (\ref{2.35}) holds if and only if  (\ref{2.20}) holds. By lemma 2.3, we have that (\ref{2.35}) holds.   
    
    Therefore, for such $\lm$ which satisfying \eqref{2.34}, $q(t) = r(t)E(t)q_0$ is a solution of the system where $r(t)$ is the classical solution of the ODE (\ref{3.81}) and $\om$ is given by (\ref{1.16}). \hfill$\square$

{\bf Acknowledgment. }{\it This paper is a part of my Ph.D. thesis. I would like to express my sincere thanks to my advisor Professor Yiming Long for his valuable    
    guidance, help, suggestions and encouragements during my study and discussions on this topic.}

\end{document}